\documentclass[a4paper]{amsart}
\usepackage{graphicx}
\usepackage{amsmath}
\usepackage{amssymb}
\usepackage{oldgerm}
\usepackage[all]{xy}

\newcommand{\Hom}{\operatorname{Hom}\nolimits}
\renewcommand{\Im}{\operatorname{Im}\nolimits}

\newcommand{\Ker}{\operatorname{Ker}\nolimits}

\newcommand{\Pd}{\operatorname{pd}\nolimits}

\newcommand{\Ht}{\operatorname{ht}\nolimits}
\newcommand{\depth}{\operatorname{depth}\nolimits}

\newcommand{\Tor}{\operatorname{Tor}\nolimits}
\newcommand{\Ext}{\operatorname{Ext}\nolimits}
\newcommand{\Spec}{\operatorname{Spec}\nolimits}

\newcommand{\m}{\operatorname{\mathfrak{m}}\nolimits}
\newcommand{\az}{\operatorname{\mathfrak{a}}\nolimits}
\newcommand{\pr}{\operatorname{\mathfrak{p}}\nolimits}

\newcommand{\cx}{\operatorname{cx}\nolimits}

\newcommand{\cidim}{\operatorname{CI-dim}\nolimits}

\newcommand{\p}{\operatorname{p}\nolimits}
\newcommand{\q}{\operatorname{q}\nolimits}

\newtheorem{theorem}{Theorem}[section]

\newtheorem{corollary}[theorem]{Corollary}

\newtheorem{lemma}[theorem]{Lemma}
\newtheorem{proposition}[theorem]{Proposition}
\newtheorem{definition}[theorem]{Definition}
\theoremstyle{definition}
\newtheorem*{example}{Example}

\theoremstyle{definition}

\theoremstyle{remark}
\newtheorem*{remark}{Remark}
\theoremstyle{definition}
\newtheorem*{assumption}{Assumption}

\begin{document}
\title{Modules with reducible complexity}
\author{Petter Andreas Bergh}
\address{Petter Andreas Bergh \\ Institutt for matematiske fag \\
NTNU \\ N-7491 Trondheim \\ Norway} \email{bergh@math.ntnu.no}
\maketitle

\begin{abstract}
For a commutative Noetherian local ring we define and study the
class of modules having reducible complexity, a class containing all
modules of finite complete intersection dimension. Various
properties of this class of modules are given, together with results
on the vanishing of homology and cohomology.
\end{abstract}

\section{Introduction}

The complexity of a finitely generated module over a commutative
Noetherian local ring is a measure on a polynomial scale of the
growth of the ranks of the free modules in its minimal free
resolution. Over a local complete intersection every finitely
generated module has finite complexity. In fact, it follows from
\cite[Theorem 2.3]{Gulliksen} that this property characterizes a
local complete intersection, and that it is equivalent to the
complexity of the residue field being finite.

In \cite{Avramov3} a certain finiteness condition was defined under
which a module \emph{behaves} homologically like a module over a
complete intersection. Namely, a module $M$ over a commutative
Noetherian local ring $A$ has finite \emph{complete intersection
dimension} if there exist local rings $R$ and $Q$ and a diagram $A
\to R \twoheadleftarrow Q$ of local homomorphisms (called a
\emph{quasi-deformation} of $A$) such that $A \to R$ is faithfully
flat, $R \twoheadleftarrow Q$ is surjective with kernel generated by
a regular sequence, and $\Pd_Q (R \otimes_A M)$ is finite. There is
of course a reason behind the choice of terminology; it was shown in
\cite{Avramov3} that a local ring is a complete intersection if and
only if all its finitely generated modules have finite complete
intersection dimension, and that this is equivalent to the
finiteness of the complete intersection dimension of the residue
field. Moreover, it was shown that if the projective dimension of a
module is finite, then it is equal to the complete intersection
dimension of the module.

We shall study a class of modules whose complexity is ``reducible"
(defined in the next section), a class which contains all modules of
finite complete intersection dimension. In particular, we
investigate for this class of modules the vanishing of homology and
cohomology, and generalize several of the results in \cite{Araya},
\cite{Choi}, \cite{Jorgensen1} and \cite{Jorgensen2}.

\section{reducible complexity}

Throughout this paper we let ($A, \m, k$) be a commutative
Noetherian local ring, and we suppose all modules are finitely
generated. We fix a finitely generated nonzero $A$-module $M$ with
minimal free resolution
$$\bold{F}_M \colon \cdots \to F_2 \to F_1
\to F_0 \to M \to 0.$$ The rank of $F_n$, i.e.\ the integer $\dim_k
\Ext_A^n (M,k)$, is the $n$'th \emph{Betti number} of $M$, and we
denote this by $\beta_n(M)$. The \emph{complexity} of $M$, denoted
$\cx M$, is defined as
$$\cx M = \inf \{ t \in \mathbb{N}_0 \mid \exists a
\in \mathbb{R} \text{ such that } \beta_n(M) \leq an^{t-1} \text{
for } n \gg 0 \},$$ where $\mathbb{N}_0 = \mathbb{N} \cup \{ 0 \}$.
In general the complexity of a module may be infinite, whereas it is
zero if and only if the module has finite projective dimension.

We now give the main definition, which is inductive and defines the
class of modules having reducible complexity as a subcategory of the
category of (finitely generated) $A$-modules having finite
complexity. However, before stating the definition, recall that the
$n$'th \emph{syzygy} of an $A$-module $X$ with minimal free
resolution
$$\cdots \to P_2 \to P_1 \to P_0 \to X \to 0$$
is the cokernel of $P_{n+1} \to P_n$ and denoted by $\Omega_A^n (X)$
(note that $\Omega_A^0 (X)=X$), and it is unique up to isomorphism.
For an $A$-module $Y$ and a homogeneous element $\eta \in
\Ext^*_A(X,Y)$, choose a map $f_{\eta} \colon \Omega_A^{|\eta|}(X)
\to Y$ representing $\eta$, and denote by $K_{\eta}$ the pushout of
this map and the inclusion $\Omega_A^{|\eta|}(X) \hookrightarrow
P_{|\eta|-1}$. As parallel maps in a pushout diagram have isomorphic
cokernels, we obtain an exact sequence
$$0 \to Y \to K_{\eta} \to \Omega_A^{|\eta|-1}(X) \to 0.$$
Note that the middle term $K_{\eta}$ is independent (up to
isomorphism) of the map $f_{\eta}$ chosen to represent $\eta$.

\begin{definition}\label{def}
Let $\mathcal{C}_A$ denote the category of all $A$-modules having
finite complexity. The subcategory $\mathcal{C}_A^r \subseteq
\mathcal{C}_A$ of modules having \emph{reducible complexity} is
defined inductively as follows:
\begin{enumerate}
\item[(i)] Every module of finite projective dimension belongs to
$\mathcal{C}_A^r$. \item[(ii)]A module $X \in \mathcal{C}_A$ with
$\cx X >0$ belongs to $\mathcal{C}_A^r$ if there exists a
homogeneous element $\eta \in \Ext^*_A(X,X)$ of positive degree
such that $\cx K_{\eta} < \cx X, \depth K_{\eta} = \depth X$ and
$K_{\eta} \in \mathcal{C}_A^r$. We say that the element $\eta$
\emph{reduces the complexity} of $M$.
\end{enumerate}
\end{definition}

\begin{remark}\sloppy
The condition $\depth K_{\eta} = \depth X$ is not very strong;
suppose for example that $\depth \Omega_A^i(X) \leq \depth A$ for
all $i$ (this always happens when $A$ is Cohen-Macaulay). Then we
must have $\depth X \leq \depth \Omega_A^{|\eta|-1} (X)$, implying
the equality $\depth K_{\eta} = \depth X$.
\end{remark}

Note the trivial fact that if every $A$-module has reducible
complexity, then $A$ must be a complete intersection since then by
definition every module has finite complexity. The following
result shows that the converse is also true, in fact every module
of finite complete intersection dimension has reducible
complexity. Moreover, if $A$ is Cohen-Macaulay and $M$ has
reducible complexity, then so does any syzygy of $M$.

\begin{proposition}\label{prop2.2}
\begin{enumerate}
\item[(i)] Every module of finite complete intersection dimension
has reducible complexity. \item[(ii)] If $A$ is Cohen-Macaulay and
$M$ has reducible complexity, then so does the kernel of any
surjective map $F \twoheadrightarrow M$ when $F$ is free. In
particular, any syzygy of $M$ has reducible complexity.
\item[(iii)] if $B$ is a local ring and $A \to B$ a faithfully flat local
homomorphism, then if $M$ has reducible complexity, so does the
$B$-module $B \otimes_A M$.
\end{enumerate}
\end{proposition}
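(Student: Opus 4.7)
All three parts proceed by strong induction on $\cx M$, with common base case $\cx M=0$, i.e.\ $\Pd_A M<\infty$: then $M\in\mathcal{C}_A^r$ by clause (i) of Definition~\ref{def}, and since finite projective dimension is preserved both by taking syzygies and by tensoring with a flat $B$, the conclusions of (ii) and (iii) also reduce to clause (i) in this case. Throughout the inductive step we fix a complexity-reducing element $\eta$ of $M$ and try to transport the situation to the setting of the relevant part.

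For (i), assume $M$ has finite complete intersection dimension with $\cx M=c\geq 1$ and fix a quasi-deformation $A\to R\twoheadleftarrow Q$. The Eisenbud/Gulliksen cohomology operators attached to the regular sequence generating $\Ker(Q\twoheadrightarrow R)$ furnish elements of $\Ext^2_R(R\otimes_A M,R\otimes_A M)$ that strictly reduce complexity over $R$. Using the flat base change isomorphism $\Ext^2_A(M,M)\otimes_A R\cong\Ext^2_R(R\otimes_A M,R\otimes_A M)$ together with a generic-choice/descent argument in the spirit of Avramov--Gasharov--Peeva, one produces $\eta\in\Ext^2_A(M,M)$ such that $K_\eta$ again has finite complete intersection dimension and $\cx K_\eta=c-1$. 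For the depth condition, the Auslander--Buchsbaum-type identity $\depth_A N+\Pd_Q(R\otimes_A N)-\Pd_Q R=\depth A$, valid for every module $N$ of finite complete intersection dimension, implies $\depth\Omega_A^i(X)\leq\depth A$ for every such $X$ and every $i$; the Remark then gives $\depth K_\eta=\depth M$, and the inductive hypothesis applied to $K_\eta$ places $M$ in $\mathcal{C}_A^r$. The principal obstacle here is the descent from $\Ext^2_R$ to $\Ext^2_A$, since the Eisenbud operators live naturally on the $R$-side.

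For (ii), first observe that $\mathcal{C}_A^r$ is closed under adding free summands, as follows by induction along Definition~\ref{def} (an $\eta$ reducing the complexity of $X$, viewed in $\Ext^*_A(X\oplus F,X\oplus F)$, has $K_\eta\oplus F$ as its pushout module); by Schanuel's lemma it therefore suffices to treat the minimal first syzygy $\Omega_A^1(M)$. Let $\eta\in\Ext^n_A(M,M)$ reduce the complexity of $M$ and be represented by $f\colon\Omega_A^n(M)\to M$; the induced map on first syzygies $\Omega_A^{n+1}(M)\to\Omega_A^1(M)$ represents an element $\eta'\in\Ext^n_A(\Omega_A^1(M),\Omega_A^1(M))$. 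The key identification is $K_{\eta'}\cong\Omega_A^1(K_\eta)\oplus F'$ for some free $F'$: the Horseshoe lemma applied to the defining sequence $0\to M\to K_\eta\to\Omega_A^{n-1}(M)\to 0$ of $K_\eta$ produces an exact sequence $0\to\Omega_A^1(M)\to\Omega_A^1(K_\eta)\oplus F'\to\Omega_A^n(M)\to 0$, the defining sequence of $K_{\eta'}$ reads $0\to\Omega_A^1(M)\to K_{\eta'}\to\Omega_A^n(M)\to 0$, and both represent $\eta'$ in $\Ext^1_A(\Omega_A^n(M),\Omega_A^1(M))$ under the dimension-shift isomorphism from $\Ext^n_A(\Omega_A^1(M),\Omega_A^1(M))$, so they are equivalent extensions. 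Consequently $\cx K_{\eta'}=\cx K_\eta<\cx M=\cx\Omega_A^1(M)$. Since $A$ is Cohen--Macaulay, $\depth N\leq\dim A=\depth A$ for every finitely generated $N$, so the Remark applies and delivers $\depth K_{\eta'}=\depth\Omega_A^1(M)$. The inductive hypothesis applied to $K_\eta$ gives $\Omega_A^1(K_\eta)\in\mathcal{C}_A^r$, whence $K_{\eta'}\in\mathcal{C}_A^r$, and finally $\Omega_A^1(M)\in\mathcal{C}_A^r$. The main obstacle is the extension-equivalence identification of $K_{\eta'}$ with $\Omega_A^1(K_\eta)\oplus F'$.

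For (iii), let $\eta\in\Ext^n_A(M,M)$ reduce the complexity of $M$. Flatness of $B$ over $A$ ensures that $B\otimes_A F_\bullet$ is a free $B$-resolution of $B\otimes_A M$ with the same ranks, that $B\otimes_A\Omega_A^i(M)\cong\Omega_B^i(B\otimes_A M)$, and that tensoring the pushout square defining $K_\eta$ with $B$ yields the pushout square defining $K_{\eta_B}$, where $\eta_B$ is the image of $\eta$ under the base change isomorphism $\Ext^n_A(M,M)\otimes_A B\cong\Ext^n_B(B\otimes_A M,B\otimes_A M)$; thus $K_{\eta_B}\cong B\otimes_A K_\eta$. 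Computing $\Ext^n_A(M,k_B)$ via projective resolutions over $A$ and over $B$ separately yields $\beta_n^B(B\otimes_A M)=\beta_n^A(M)$, so complexities coincide and $\cx K_{\eta_B}<\cx(B\otimes_A M)$. The Grothendieck depth formula $\depth_B(B\otimes_A N)=\depth_A N+\depth_B(B/\m_A B)$ for faithfully flat extensions, applied to $N=K_\eta$ and $N=M$, gives the depth equality $\depth_B K_{\eta_B}=\depth_B(B\otimes_A M)$. The inductive hypothesis applied to $K_\eta$ then places $B\otimes_A M$ in $\mathcal{C}_B^r$. This part amounts essentially to bookkeeping and presents no substantial conceptual obstacle.
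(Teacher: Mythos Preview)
Your arguments for (ii) and (iii) are essentially the paper's, up to cosmetic differences: in (ii) the paper works directly with an arbitrary kernel $L$ of $F\twoheadrightarrow M$ and builds the Horseshoe diagram there, rather than first reducing to $\Omega_A^1(M)$ via Schanuel and closure under free summands; in (iii) the paper records exactly the same base-change facts you do.

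For (i), however, the paper takes a cleaner route that sidesteps the very obstacle you flag. It does \emph{not} attempt to descend an Eisenbud operator from $\Ext_R^2$ to $\Ext_A^2$. Instead it invokes \cite[Proposition~7.2]{Avramov3} directly over $A$: that result supplies an eventually surjective chain endomorphism of degree $-n$ (some $n>0$, not necessarily $2$) on the minimal $A$-free resolution of $M$, which \emph{is} an element $\eta\in\Ext_A^n(M,M)$ with no descent needed. The paper then proves $\cx K_\eta=\cx M-1$ explicitly rather than asserting it: in the long exact sequence of $\Ext_A^*(-,k)$ the connecting map is multiplication by $\pm\eta$, hence eventually injective by the ``eventually surjective'' property, yielding $\beta_{i+1}(K_\eta)=\beta_{i+n}(M)-\beta_i(M)$ for $i\gg0$; the pole-order interpretation of complexity for modules of finite CI-dimension \cite[Theorem~5.3]{Avramov3} then gives the drop by one. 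Your descent idea is genuinely more delicate: the base-change isomorphism only says an Eisenbud operator is an $R$-linear combination of classes pushed up from $\Ext_A^2(M,M)$, not that it is $\eta\otimes 1$ for a single $\eta$, so a ``generic choice'' argument would require real work that you have not supplied. The paper's approach buys you both the element over $A$ and a concrete complexity computation for free.
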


\begin{proof}
(i) If $M$ has finite complete intersection dimension, then from
\cite[Proposition 5.2]{Avramov3} it follows that the complexity of
$M$ is finite. We argue by induction on $\cx M $ that $M$ has
reducible complexity, the case $\cx M=0$ following from the
definition. Suppose therefore that the complexity of $M$ is nonzero.
By \cite[Proposition 7.2]{Avramov3} there exists an eventually
surjective chain map of degree $-n$ (where $n>0$) on the minimal
free resolution of $M$. This chain map corresponds to an element
$\eta$ of $\Ext_A^n (M,M)$, giving the exact commutative diagram
$$\xymatrix{
0 \ar[r] & \Omega_A^n (M) \ar[r] \ar[d]^{f_{\eta}} & F_{n-1}
\ar[r] \ar[d] & \Omega_A^{n-1} (M) \ar[r] \ar@{=}[d] & 0 \\
0 \ar[r] & M \ar[r] & K_{\eta} \ar[r] & \Omega_A^{n-1} (M) \ar[r]
& 0 }$$ of $A$-modules.

Now consider the exact sequence involving $K_{\eta}$. Since $M$
and $\Omega_A^{n-1}(M)$ both have finite complete intersection
dimension, then so does $K_{\eta}$. The exact sequence gives rise
to a long exact sequence
$$\cdots \to \Ext_A^i (K_{\eta},k) \to \Ext_A^i (M,k)
\xrightarrow{\partial_{\eta}} \Ext_A^{i+n} (M,k) \to \Ext_A^{i+1}
(K_{\eta},k) \to \cdots,$$ where $\Ext_A^i ( \Omega_A^{n-1} (M) ,k)$
has been identified with $\Ext_A^{i+n-1} (M,k)$. It follows from
\cite[Theorem III.9.1]{MacLane} that the connecting homomorphism
$\partial_{\eta} \colon \Ext_A^i (M,k) \to \Ext_A^{i+n} (M,k)$ is
then scalar multiplication by $(-1)^i \eta$ (we think of
$\Ext_A^*(M,k)$ as a graded right module over the graded ring
$\Ext_A^*(M,M)$), and reversing the arguments in the proof of
\cite[Proposition 2.2]{Bergh} shows that this is an injective map
for $i \gg 0$. Consequently the equality $\beta_{i+1} (K_{\eta}) =
\beta_{i+n} (M) - \beta_i (M)$ holds for large values of $i$. By
\cite[Theorem 5.3]{Avramov3} the complexities $\cx M$ and $\cx
K_{\eta}$ equal the orders of the poles at $t=1$ of the Poincar\'e
series $\sum \beta_n (M)t^n$ and $\sum \beta_n (K_{\eta})t^n$,
respectively, thus $\cx K_{\eta} = \cx M-1$.

It remains only to show that $\depth K_{\eta} = \depth M$, but
this is easy; from \cite[Theorem 1.4]{Avramov3} we have $0 \leq
\cidim \Omega_A^i(M) = \depth A - \depth \Omega_A^i(M)$ for each
$i \geq 0$, and this implies the inequality $\depth \Omega_A^i(M)
\leq \depth \Omega_A^{i+1} (M)$. In particular we have $\depth M
\leq \depth \Omega_A^{n-1} (M)$, and therefore $\depth K_{\eta}$
must equal $\depth M$.

(ii) Let $L$ denote the kernel of the surjective map $F
\twoheadrightarrow M$. Again we argue by induction on $\cx M$. If
the projective dimension of $M$ is finite, then so is the
projective dimension of $L$, and we are done. Suppose therefore
$\cx M$ is nonzero, and let $\eta \in \Ext_A^*(M,M)$ be an element
reducing the complexity of $M$. By the Horseshoe Lemma we have an
exact commutative diagram
$$\xymatrix{
& 0 \ar[d] & 0 \ar[d] & 0 \ar[d] \\
0 \ar[r] & L \ar[d] \ar[r] & K \oplus Q \ar[d] \ar[r] &
\Omega_A^{|\eta|-1} (L) \ar[d] \ar[r] & 0 \\
0 \ar[r] & F \ar[d] \ar[r] & F' \oplus Q \ar[d] \ar[r] & F''
\oplus Q \ar[d] \ar[r] & 0 \\
0 \ar[r] & M \ar[d] \ar[r] & K_{\eta} \ar[d] \ar[r] &
\Omega_A^{|\eta|-1}(M) \ar[d] \ar[r] & 0 \\
& 0 & 0 & 0 }$$ where $F'$ and $F''$ are free modules, and $Q$ is
a free module such that $\Omega_A^{|\eta|}(M) \oplus Q \simeq
\Omega_A^{|\eta|-1} (L)$. If $M$ and $K_{\eta}$ are maximal
Cohen-Macaulay, then so are $L$ and $K \oplus Q$, and if not then
$\depth L = \depth M+1$ and $\depth (K \oplus Q) = \depth
K_{\eta}+1$. In either case we see that $\depth L$ equals $\depth
(K \oplus Q)$. Moreover, we have $\cx (K \oplus Q) = \cx K_{\eta}
< \cx M = \cx L$, and so by induction we are done.

Note that the upper horizontal short exact sequence in the above
diagram corresponds to an element $\theta$ in $\Ext_A^{|\eta|}(L,L)$
reducing the complexity of $L$. A map $f_{\theta} \colon
\Omega_A^{|\eta|} (L) \to L$ representing $\theta$ is obtained by
lifting a map $f_{\eta}$ representing $\eta$ along the minimal free
resolution of $M$, as in the diagram
$$\xymatrix{
0 \ar[r] & \Omega_A^{|\eta|+1} (M) \ar@{.>}[d]^{\Omega_A(f_{\eta})}
\ar[r] & F_{|\eta|} \ar@{.>}[d] \ar[r] & \Omega_A^{|\eta|} (M)
\ar[d]^{f_{\eta}} \ar[r] & 0 \\
0 \ar[r] & L \ar[r] & F \ar[r] & M \ar[r] & 0 }$$ In this way we
obtain a map $\Omega_A(f_{\eta}) \colon \Omega_A^{|\eta|+1}(M) \to
L$, and adding to $\Omega_A^{|\eta|+1}(M)$ a free module $Q'$ such
that $\Omega_A^{|\eta|+1}(M) \oplus Q' \simeq \Omega_A^{|\eta|}
(L)$, we obtain a map $\Omega_A^{|\eta|} (L) \to L$ representing
$\theta$.

(iii) If $X$ is any $A$-module with a minimal free resolution
$\bold{F}_X$, then the complex $B \otimes_A \bold{F}_X$ is a minimal
free $B$-resolution of $B \otimes_A X$, giving the equality $\cx_A X
= \cx_B (B \otimes_A X)$. Moreover, if $Y$ is another $A$-module
then from \cite[Theorem 23.3]{Matsumura} we have $\depth_A X -
\depth_A Y = \depth _B (B \otimes_A X) - \depth_B (B \otimes_A Y)$.
Hence if $\eta \in \Ext_A^*(M,M)$ reduces the complexity of $M$, the
element $B \otimes_A \eta \in \Ext_B^*(B \otimes_A M,B \otimes_A M)$
reduces the complexity of $B \otimes_A M$.
\end{proof}

Thus the class of modules having finite complete intersection
dimension is contained in the class of modules having reducible
complexity. However, the following example shows that the inclusion
is strict in general; there are a lot of modules having reducible
complexity but whose complete intersection dimension is infinite.

\begin{example}
Suppose $M$ is periodic of period $p \geq 3$ (i.e.\ $\Omega_A^p (M)$
is isomorphic to $M$), and that $\depth M \leq \depth \Omega_A^{p-1}
(M)$. Then we have an exact sequence
$$0 \to M \to F_{p-1} \to \Omega_A^{p-1} (M) \to 0,$$
and we have $0= \cx F_{p-1} = \cx M-1$ and $\depth F_{p-1} =
\depth M$. Therefore $M$ has reducible complexity, and it cannot
be of finite complete intersection dimension since then by
\cite[Theorem 7.3]{Avramov3} the period would have been two.

An example of such a module was given in \cite[Section3]{Gasharov};
let $(A, \m, k)$ be the commutative local finite dimensional
$k$-algebra $k[x_1, x_2, x_3, x_4]/ \az$, where $\az$ is the ideal
generated by the quadratic forms
$$x_1^2, \hspace{.5cm} x_2^2, \hspace{.5cm} x_3^2, \hspace{.5cm}
x_4^2, \hspace{.5cm} x_3x_4, \hspace{.5cm} x_1x_4+x_2x_4,
\hspace{.5cm} \alpha x_1x_3+x_2x_3$$ for a nonzero element $\alpha
\in k$. The complex
$$\cdots \to A^2 \xrightarrow{d_2} A^2
\xrightarrow{d_1} A^2 \xrightarrow{d_0} M \to 0,$$ where the maps
are given by the matrices
$$d_n = \left( \begin{array}{cc}
                 x_1 & \alpha^n x_3 + x_4 \\
                 0 & x_2
               \end{array} \right), $$
is a minimal free resolution of the module $M := \Im d_0$, and so if
$\alpha$ has finite order $p$ we see that $M$ is periodic of period
$p$.

It is worth mentioning that there do exist examples of modules
over Gorenstein rings whose complete intersection dimension is not
finite but which have reducible complexity (see \cite[Proposition
3.1]{Gasharov} for an example similar to that above).
\end{example}

\sloppy Now let $X$ and $Y$ be arbitrary $A$-modules, and $\theta_1
\in \Ext_A^* (X,X)$ and $\theta_2 \in \Ext_A^* (X,Y)$ two
homogeneous elements. The following lemma, motivated by \cite[Lemma
7.2]{Erdmann}, links $K_{\theta_1}$ and $K_{\theta_2}$ to
$K_{\theta_2 \theta_1}$, and will be a key ingredient in several of
the forthcoming results.

\begin{lemma}\label{lem2.3}
If $\theta_1 \in \Ext_A^* (X,X)$ and $\theta_2 \in \Ext_A^* (X,Y)$
are two homogeneous elements, then there exists an exact sequence
$$0 \to \Omega_A^{|\theta_2|}(K_{\theta_1}) \to K_{\theta_2
\theta_1}\oplus F \to K_{\theta_2} \to 0$$ of $A$-modules, where $F$
is free.
\end{lemma}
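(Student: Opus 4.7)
Set $n_1 = |\theta_1|$ and $n_2 = |\theta_2|$, and represent $\theta_1$ and $\theta_2$ by maps $f_{\theta_1} \colon \Omega_A^{n_1}(X) \to X$ and $f_{\theta_2} \colon \Omega_A^{n_2}(X) \to Y$. Lifting $f_{\theta_1}$ to a chain map $\varphi_\bullet \colon F_{\bullet+n_1} \to F_\bullet$ along the minimal free resolution of $X$ yields the induced map $\Omega_A^{n_2}(f_{\theta_1}) \colon \Omega_A^{n_1+n_2}(X) \to \Omega_A^{n_2}(X)$ on syzygies together with the component $\varphi_{n_2-1} \colon F_{n_1+n_2-1} \to F_{n_2-1}$ at the chain level, and with the composite $f_{\theta_2} \circ \Omega_A^{n_2}(f_{\theta_1})$ representing $\theta_2\theta_1$. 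This data induces a canonical map $\Phi \colon K_{\theta_2\theta_1} \to K_{\theta_2}$ of pushouts restricting to the identity on $Y$.

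The plan is to combine $\Phi$ with the canonical map $\pi \colon F_{n_2-1} \to K_{\theta_2}$ coming from the pushout defining $K_{\theta_2}$ into the map
$$\psi = \Phi + \pi \colon K_{\theta_2\theta_1} \oplus F_{n_2-1} \to K_{\theta_2}.$$
This $\psi$ is surjective, since $\Im(\Phi)$ contains $Y \subseteq K_{\theta_2}$ and $\Im(\pi)$ is the image of $F_{n_2-1}$ in $K_{\theta_2}$, and these together generate $K_{\theta_2}$. Using the explicit descriptions of $K_{\theta_2\theta_1}$ and $K_{\theta_2}$ as cokernels of the maps $\Omega_A^{n_1+n_2}(X) \to F_{n_1+n_2-1} \oplus Y$ and $\Omega_A^{n_2}(X) \to F_{n_2-1} \oplus Y$ given by the pushout constructions, a direct computation identifies $\ker\psi$ with the pushout $U$ of the inclusion $\Omega_A^{n_1+n_2}(X) \hookrightarrow F_{n_1+n_2-1}$ along $\Omega_A^{n_2}(f_{\theta_1})$, yielding a short exact sequence
$$0 \to U \to K_{\theta_2\theta_1} \oplus F_{n_2-1} \to K_{\theta_2} \to 0.$$

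To recast this in the stated form, I note that $U$ fits in a short exact sequence $0 \to \Omega_A^{n_2}(X) \to U \to \Omega_A^{n_1+n_2-1}(X) \to 0$, and the same is true of the $n_2$-th Horseshoe syzygy of $0 \to X \to K_{\theta_1} \to \Omega_A^{n_1-1}(X) \to 0$, whose middle term has the form $\Omega_A^{n_2}(K_{\theta_1}) \oplus G$ for some free $G$. Both sequences represent the class $\Omega_A^{n_2}(\theta_1)$ in $\Ext_A^1(\Omega_A^{n_1+n_2-1}(X), \Omega_A^{n_2}(X))$ under the natural identification with $\Ext_A^{n_1}(X,X)$, so Yoneda equivalence gives $U \cong \Omega_A^{n_2}(K_{\theta_1}) \oplus G$. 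Substituting and absorbing $G$ into the middle term via a suitable pushout then produces the stated sequence. The principal technical obstacle is this final absorption step, where one must arrange the cancellation of the free summand $G$ so that the middle term retains the form $K_{\theta_2\theta_1} \oplus F$ for some free $F$.
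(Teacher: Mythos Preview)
Your construction of the comparison map $\Phi\colon K_{\theta_2\theta_1}\to K_{\theta_2}$ and the snake-lemma identification $\ker\psi\cong\ker\chi$, where $\chi=(\Omega^{n_2-1}(f_{\theta_1}),p)\colon \Omega^{n_1+n_2-1}(X)\oplus F_{n_2-1}\to\Omega^{n_2-1}(X)$, are correct and match what the paper does. The difficulty you flag at the end, however, is a genuine gap rather than a routine detail. Knowing only that $\ker\psi\cong\Omega^{n_2}(K_{\theta_1})\oplus G$ abstractly (via Yoneda equivalence of two extensions with the same class) gives you no control over the composite $G\hookrightarrow K_{\theta_2\theta_1}\oplus F_{n_2-1}$; there is no reason this map should split, so the pushout along $\Omega^{n_2}(K_{\theta_1})\oplus G\to\Omega^{n_2}(K_{\theta_1})$ need not produce a middle term of the form $K_{\theta_2\theta_1}\oplus F'$. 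In other words, the ``suitable pushout'' you appeal to does not exist in general.

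The paper circumvents this by choosing the free summand in the middle more carefully from the outset, so that the kernel is $\Omega^{n_2}(K_{\theta_1})$ on the nose. Instead of applying the Horseshoe Lemma to the horizontal sequence $0\to X\to K_{\theta_1}\to\Omega^{n_1-1}(X)\to 0$ (which is what forces your spurious $G$), the paper first enlarges the pushout square for $\theta_1$ by a free cover $Q\twoheadrightarrow X$ to obtain a \emph{vertical} short exact sequence
\[
0\to\Omega^1(K_{\theta_1})\oplus Q'\to\Omega^{n_1}(X)\oplus Q\xrightarrow{(f_{\theta_1},g)}X\to 0,
\]
and applies the Horseshoe Lemma to \emph{this}. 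Because the left term already is a first syzygy of $K_{\theta_1}$ plus free, taking $(n_2-1)$ further syzygies yields exactly $\Omega^{n_2}(K_{\theta_1})$ with no free defect, and the middle becomes $\Omega^{n_1+n_2-1}(X)\oplus F$ for some specific free $F$ together with a map $s\colon F\to\Omega^{n_2-1}(X)$. Replacing your $(F_{n_2-1},p)$ by this $(F,s)$ and lifting $s$ through $v\colon K_{\theta_2}\to\Omega^{n_2-1}(X)$ to a map $r\colon F\to K_{\theta_2}$, the same $3\times 3$ diagram you set up now has right column with kernel $\Omega^{n_2}(K_{\theta_1})$ exactly, and the middle column is the desired sequence. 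So the missing idea is not an absorption trick after the fact, but the correct choice of $(F,s)$ via the vertical Horseshoe construction.
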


\begin{proof}
Denote $|\theta_i|$ by $n_i$ for $i=1,2$. The element $\theta_1$
gives rise to an exact commutative diagram
$$\xymatrix{
0 \ar[r] & \Omega_A^{n_1}(X) \ar[d]^{f_{\theta_1}} \ar[r]^i &
Q_{n_1-1} \ar[d]^h \ar[r]^{\pi} &
\Omega_A^{n_1-1}(X) \ar@{=}[d] \ar[r] & 0 \\
0 \ar[r] & X \ar[r]^j & K_{\theta_1} \ar[r] & \Omega_A^{n_1-1}(X)
\ar[r] & 0 }$$ where $Q_n$ denotes the $n$'th module in the minimal
free resolution of $X$. Letting $Q \xrightarrow{g} X$ be a
surjection, where $Q$ is free, we can modify the diagram and obtain
$$\xymatrix{
& 0 \ar[d] & 0 \ar[d] \\
& \Ker (h,jg) \ar[d] \ar@{=}[r] & \Ker (h,jg) \ar[d] \\
 0 \ar[r] & \Omega_A^{n_1}(X) \oplus Q
\ar@{->>}[d]^{(f_{\theta_1},g)} \ar[r]^{\left ( \begin{smallmatrix}i
& 0
\\ 0 & 1
\end{smallmatrix} \right )} & Q_{n_1-1} \oplus Q
\ar@{->>}[d]^{(h,jg)} \ar[r]^{(\pi,0)} &
\Omega_A^{n_1-1}(X) \ar@{=}[d] \ar[r] & 0 \\
0 \ar[r] & X \ar[r]^j \ar[d] & K_{\theta_1} \ar[r] \ar[d] &
\Omega_A^{n_1-1}(X) \ar[r] & 0 \\
& 0 & 0 }$$ Since $\Ker (h,jg)$ is isomorphic to $\Omega_A^1
(K_{\theta_1}) \oplus Q'$ for some free module $Q'$, the left
vertical exact sequence yields an exact sequence
$$0 \to \Omega_A^1 (K_{\theta_1}) \oplus Q' \to
\Omega_A^{n_1}(X) \oplus Q \xrightarrow{(f_{\theta_1},g)} X \to 0$$
on which we can apply the Horseshoe Lemma and obtain an exact
sequence
$$\mu \colon 0 \to \Omega_A^{n_2}(K_{\theta_1}) \to
\Omega_A^{n_1+n_2-1}(X) \oplus F
\xrightarrow{(\Omega_A^{n_2-1}(f_{\theta_1}),s)} \Omega_A^{n_2-1}(X)
\to 0,$$ where $F$ is free and $F \xrightarrow{s}
\Omega_A^{n_2-1}(X)$ is a map.

The definition of cohomological products and the pushout properties
of $K_{\theta_2 \theta_1}$ give a commutative diagram
$$\xymatrix{
0 \ar[r] & Y \ar[r] \ar@{=}[d] & K_{\theta_2 \theta_1}
\ar[r]^<<<<<<w \ar[d]^t & \Omega_A^{n_1+n_2-1}(X) \ar[r]
\ar[d]^{\Omega_A^{n_2-1}(f_{\theta_1})} & 0 \\
0 \ar[r] & Y \ar[r] & K_{\theta_2} \ar[r]^v & \Omega_A^{n_2-1}(X)
\ar[r] & 0 }$$ with exact rows. Adding $F$ to $K_{\theta_2
\theta_1}$ and $\Omega_A^{n_1+n_2-1}(X)$ in the right-most square,
we obtain the exact commutative diagram
$$\xymatrix{
&& 0 \ar[d] & 0 \ar[d] \\
&& \Omega_A^{n_2}(K_{\theta_1}) \ar@{=}[r] \ar[d] &
\Omega_A^{n_2}(K_{\theta_1}) \ar[d] \\
0 \ar[r] & Y \ar[r] \ar@{=}[d] & K_{\theta_2 \theta_1} \oplus F
\ar[r]^>>>>>{\left ( \begin{smallmatrix}w & 0 \\ 0 & 1
\end{smallmatrix} \right )} \ar[d]^{(t,r)} &
\Omega_A^{n_1+n_2-1}(X) \oplus F \ar[r]
\ar[d]^{(\Omega_A^{n_2-1}(f_{\theta_1}),s)} & 0 \\
0 \ar[r] & Y \ar[r] & K_{\theta_2} \ar[d] \ar[r]^>>>>>>>v &
\Omega_A^{n_2-1}(X) \ar[d] \ar[r] & 0 \\
&& 0 & 0 }$$ in which the right vertical exact sequence is $\mu$ and
$F \xrightarrow{r} K_{\theta_2}$ is a map with the property that
$s=vr$. The left vertical exact sequence is of the form we are
seeking.
\end{proof}

We end this section with two results on modules over complete
intersections. Recall that a maximal Cohen-Macaulay (or ``MCM"
from now on) \emph{approximation} of an $A$-module $X$ is an exact
sequence
$$0 \to Y_X \to C_X \to X \to 0$$
where $C_X$ is MCM and $Y_X$ has finite injective dimension, and
that a \emph{hull of finite injective dimension} of $X$ is an exact
sequence
$$0 \to X \to Y^X \to C^X \to 0$$
where $C^X$ is MCM and $Y^X$ has finite injective dimension. These
notions were introduced in \cite{Auslander2}, where it was shown
that every finitely generated module over a commutative Noetherian
ring admitting a dualizing module has a MCM approximation and a hull
of finite injective dimension. The following result provides a
simple proof of the complete intersection case, using a technique
similar to the proof of the main result in \cite{Bakke} and the fact
that over a complete intersection every module has reducible
complexity.

\begin{proposition}\label{prop2.4}
Suppose $A$ is a complete intersection.
\begin{enumerate}
\item[(i)] If $\eta \in \Ext_A^{|\eta|} (M,M)$ reduces the
complexity of $M$, then so does $\eta^t$ for $t \geq 1$.
\item[(ii)] Every $A$-module has a MCM approximation and a hull of
finite injective dimension.
\end{enumerate}
\end{proposition}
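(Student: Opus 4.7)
The plan for part (i) is an induction on $t$ whose engine is Lemma~\ref{lem2.3}. For the inductive step set $\theta_1 = \eta$ and $\theta_2 = \eta^{t-1}$ (so that $\theta_2 \theta_1 = \eta^t$); the lemma produces an exact sequence
$$0 \to \Omega_A^{(t-1)|\eta|}(K_\eta) \to K_{\eta^t} \oplus F \to K_{\eta^{t-1}} \to 0.$$
By hypothesis $\cx K_\eta < \cx M$, so the left term (a syzygy of $K_\eta$) shares its complexity with $K_\eta$ and hence is $< \cx M$; the right term satisfies $\cx K_{\eta^{t-1}} < \cx M$ by the induction hypothesis; and the standard complexity inequality along a short exact sequence then gives $\cx K_{\eta^t} < \cx M$. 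Since $A$ is Cohen-Macaulay, the remark after Definition~\ref{def} furnishes $\depth K_{\eta^t} = \depth M$ automatically, while $K_{\eta^t}$ lies in $\mathcal{C}_A^r$ by Proposition~\ref{prop2.2}(i) because every module over a complete intersection has finite complete intersection dimension.

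For part (ii) I would prove the two existence statements simultaneously by induction on $\cx X$. The base case $\cx X = 0$ is straightforward: $X$ then has finite projective dimension, hence also finite injective dimension since $A$ is Gorenstein, so any free cover $0 \to \Omega_A^1(X) \to F \to X \to 0$ serves as a MCM approximation (the kernel still has finite projective dimension), and $0 \to X \to X \to 0 \to 0$ is a trivial hull.

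For the inductive step pick $\eta \in \Ext_A^*(X,X)$ reducing the complexity of $X$, and use part (i) to replace $\eta$ by a power of itself large enough that $|\eta| - 1 \geq \dim A$; this guarantees that $\Omega_A^{|\eta|-1}(X)$ is MCM. The induction hypothesis, applied to $K_\eta$ (which has strictly smaller complexity), yields a MCM approximation $0 \to Y' \to C' \to K_\eta \to 0$ and a hull $0 \to K_\eta \to Y'' \to C'' \to 0$. To build a hull of $X$, compose the inclusions $X \hookrightarrow K_\eta \hookrightarrow Y''$; the cokernel $Y''/X$ then fits in
$$0 \to \Omega_A^{|\eta|-1}(X) \to Y''/X \to C'' \to 0,$$
whose outer terms are both MCM, so by the depth lemma $Y''/X$ is MCM and $0 \to X \to Y'' \to Y''/X \to 0$ is the required hull. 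To build a MCM approximation of $X$, form the pullback $Z$ of $X \hookrightarrow K_\eta$ along $C' \twoheadrightarrow K_\eta$; then $Z$ sits in both $0 \to Y' \to Z \to X \to 0$ and $0 \to Z \to C' \to \Omega_A^{|\eta|-1}(X) \to 0$, and the second sequence shows that $Z$ is MCM.

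The main obstacle is the depth bookkeeping in the inductive step of (ii): without the power-raising of part (i) one could not force $|\eta| - 1 \geq \dim A$, and then $\Omega_A^{|\eta|-1}(X)$ would typically fail to be MCM, destroying the MCM-ness of both the extension $Y''/X$ and the pullback $Z$. Thus part (i) is not a decorative consequence but the key technical ingredient that makes the inductive construction in part (ii) go through.
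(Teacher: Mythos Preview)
Your proof of (i) is correct and essentially identical to the paper's: both use Lemma~\ref{lem2.3} and induction on $t$, with the paper just phrasing the conclusion as $\cx K_{\eta^t}\le\cx K_\eta$ and leaving the depth and $\mathcal{C}_A^r$ conditions implicit (they follow exactly as you say, from the remark after Definition~\ref{def} and Proposition~\ref{prop2.2}(i)).

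Your proof of (ii) is also correct, but organized differently. The paper does \emph{not} induct on $\cx X$. Instead it starts with any sequence $0\to Y\to C\to X\to 0$ with $C$ MCM (e.g.\ a free cover) and inducts on $\cx Y$: choosing $\eta$ reducing the complexity of $Y$ and a power so that $\Omega_A^{t|\eta|-1}(Y)$ is MCM, it forms the pushout of $Y\hookrightarrow K_{\eta^t}$ along $Y\hookrightarrow C$ to get $0\to K_{\eta^t}\to C'\to X\to 0$ with $C'$ again MCM and $\cx K_{\eta^t}<\cx Y$. Iterating drives the complexity of the left term to zero, producing a MCM approximation. The hull is built by the dual pushout argument, starting from a sequence $0\to X\to Y\to C\to 0$ with $C$ MCM. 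Your route instead applies the induction hypothesis to $K_\eta$ and descends to $X$ via a pullback (for the approximation) and a composition of inclusions (for the hull), then checks MCM-ness of the new middle term by the depth lemma. Both approaches crucially need (i) to make the relevant syzygy MCM; the paper's is perhaps a bit more self-contained because each step only manipulates one pushout diagram rather than invoking a fully formed approximation of an auxiliary module, but yours is equally valid and arguably closer in spirit to how one usually propagates approximations along short exact sequences.
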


\begin{proof}
(i) Using Lemma \ref{lem2.3} it is easily proved by induction on $t$
that $\cx K_{\eta^t} \leq \cx K_{\eta}$.

(ii) Fix an exact sequence
$$0 \to Y \to C \to M \to 0$$
where $C$ is MCM (the minimal free cover of $M$, for example). If
the complexity of $Y$ is nonzero, let $\eta \in \Ext_A^{|\eta|}
(Y,Y)$ be an element reducing it, and choose an integer $t \geq 1$
with the property that $\Omega_A^{t |\eta| -1}(Y)$ is MCM. The
element $\eta^t$ is given by the exact sequence
$$0 \to Y \to K_{\eta^t} \to \Omega_A^{t |\eta| -1}(Y) \to 0,$$
and by (i) it also reduces the complexity of $Y$. From the pushout
diagram
$$\xymatrix{
& 0 \ar[d] & 0 \ar[d] \\
0 \ar[r] & Y \ar[d] \ar[r] & K_{\eta^t} \ar[d] \ar[r] & \Omega_A^{t
|\eta| -1}(Y) \ar@{=}[d] \ar[r] & 0 \\
0 \ar[r] & C \ar[d] \ar[r] & C' \ar[d] \ar[r] & \Omega_A^{t |\eta|
-1}(Y) \ar[r] & 0 \\
& M \ar[d] \ar@{=}[r] & M \ar[d] \\
& 0 & 0 }$$ we obtain the exact sequence
$$0 \to Y' \to C' \to M \to 0$$
(with $Y' = K_{\eta^t}$), where $C'$ is MCM and $\cx Y' < \cx Y$.
Repeating the process we eventually obtain a MCM approximation of
$M$, since over a Gorenstein ring a module has finite injective
dimension precisely when it has finite projective dimension.

As for a hull of finite injective dimension, fix an exact sequence
$$0 \to M \to Y \to C \to 0$$
where $C$ is MCM (obtained for example from a suitable power of an
element in $\Ext_A^* (M,M)$ reducing the complexity of $M$). If the
complexity of $Y$ is nonzero, choose as above an element $\eta \in
\Ext_A^{|\eta|} (Y,Y)$ reducing it, and let $t \geq 1$ be an integer
such that $\Omega_A^{t |\eta| -1}(Y)$ is MCM. From the pushout
diagram
$$\xymatrix{
& 0 \ar[d] & 0 \ar[d] \\
& M \ar[d] \ar@{=}[r] & M \ar[d] \\
0 \ar[r] & Y \ar[d] \ar[r] & K_{\eta^t} \ar[d] \ar[r] & \Omega_A^{t
|\eta| -1}(Y) \ar@{=}[d] \ar[r] & 0 \\
0 \ar[r] & C \ar[d] \ar[r] & C' \ar[d] \ar[r] & \Omega_A^{t |\eta|
-1}(Y) \ar[r] & 0 \\
& 0 & 0 }$$ we obtain the exact sequence
$$0 \to M \to Y' \to C' \to 0$$
(with $Y' = K_{\eta^t}$), where $C'$ is MCM and $\cx Y' < \cx Y$.
Repeating the process we eventually obtain a hull of finite
injective dimension of $M$.
\end{proof}

\section{Vanishing results}

This section investigates the vanishing of cohomology and homology
for a module having reducible complexity, and \emph{the following
is assumed throughout}:

\begin{assumption}
The module $M$ has reducible complexity, and $N$ is a nonzero
$A$-module. If $\cx M>0$, then there exist $A$-modules $K_1,
\dots, K_c$ and a set of cohomological elements $\{ \eta_i \in
\Ext_A^{|\eta_i|}(K_{i-1},K_{i-1}) \}_{i=1}^c$ given by exact
sequences
$$0 \to K_{i-1} \to K_i \to \Omega_A^{|\eta_i|-1}(K_{i-1}) \to 0$$
for $i=1, \dots, c$ (where $K_0=M$), satisfying $\depth K_i =
\depth M, \cx K_i < \cx K_{i-1}$ and $\cx K_c =0$ (such elements
$\eta_i$ must exist by Definition \ref{def}).
\end{assumption}

For an $A$-module $N$, we define $\q^A (M,N)$ and $\p^A (M,N)$ by
\begin{eqnarray*}
\q^A (M,N) &=& \sup \{ n \mid \Tor_n^A (M,N) \neq 0 \}, \\
\p^A (M,N) &=& \sup \{ n \mid \Ext^n_A (M,N) \neq 0 \}.
\end{eqnarray*}
The definition of modules having reducible complexity suggests
that when proving results about $\q^A (M,N)$ and $\p^A (M,N)$, we
use induction on the complexity of $M$.

The first result and its corollary (which considers a conjecture
of Auslander and Reiten) consider the vanishing of cohomology, and
generalize \cite[Theorem 4.2 and Theorem 4.3]{Araya}.

\begin{theorem}\label{thm3.1}
The following are equivalent.
\begin{enumerate}
\item[(i)] There exists an integer $t > \depth A - \depth M$ such
that $\Ext_A^{t+i} (M,N)=0$ for $0 \leq i \leq |\eta_1| + \cdots +
|\eta_c|-c$. \item[(ii)] $\p^A (M,N) < \infty.$ \item[(iii)] $\p^A
(M,N) = \depth A - \depth M.$
\end{enumerate}
\end{theorem}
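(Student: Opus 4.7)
The implications (iii)$\Rightarrow$(ii)$\Rightarrow$(i) are immediate with $d:=\depth A-\depth M$: the first is tautological, the second follows by choosing any $t>\max\{d,\p^A(M,N)\}$. The heart of the argument is (i)$\Rightarrow$(iii), which I plan to prove by induction on $\cx M$.

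For the base case $\cx M=0$, the module $M$ has finite projective dimension and the Auslander--Buchsbaum formula gives $\Pd_A M=d$, so certainly $\p^A(M,N)\le d$. For the reverse inequality $\Ext_A^d(M,N)\ne 0$, one proceeds by a short induction on $d$: for $d\ge 2$ the short exact sequence $0\to\Omega_A^1 M\to F_0\to M\to 0$ yields $\Ext_A^d(M,N)\simeq\Ext_A^{d-1}(\Omega_A^1 M,N)$; the delicate step is $d=1$, where the minimality of $F_0$ forces $\Omega_A^1 M\subseteq\m F_0$, so the image of $\Hom_A(F_0,N)\to\Hom_A(\Omega_A^1 M,N)$ is contained in $\m\Hom_A(\Omega_A^1 M,N)$ and cannot be surjective by Nakayama, giving $\Ext_A^1(M,N)\ne 0$.

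For the inductive step, apply $\Hom_A(-,N)$ to the defining sequence $0\to M\to K_1\to \Omega_A^{|\eta_1|-1}M\to 0$ and use $\Ext_A^i(\Omega_A^{|\eta_1|-1}M,N)\simeq\Ext_A^{i+|\eta_1|-1}(M,N)$ for $i\ge 1$. Intersecting the two translated indexing intervals, hypothesis (i) for $M$ transfers to the analogous hypothesis for $K_1$, with the \emph{same} integer $t$ and the shorter chain $\eta_2,\dots,\eta_c$: the width drops by exactly $|\eta_1|-1$, matching the identity $\sum_{j=1}^c|\eta_j|-c-(|\eta_1|-1)=\sum_{j=2}^c|\eta_j|-(c-1)$. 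Since $\depth K_1=\depth M$, $\cx K_1<\cx M$ and $K_1\in\mathcal{C}_A^r$, the induction hypothesis yields $\p^A(K_1,N)=d$.

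From $\Ext_A^i(K_1,N)=0$ for all $i>d$, the same long exact sequence shows that the connecting map $\Ext_A^i(M,N)\to\Ext_A^{i+|\eta_1|}(M,N)$ -- which by \cite[Theorem III.9.1]{MacLane} coincides with cup product by $\pm\eta_1$ -- is an isomorphism for every $i>d$. Hence $i\mapsto\dim_k\Ext_A^i(M,N)$ is periodic with period $|\eta_1|$ on the range $i>d$. The vanishing interval granted by (i) has length $\sum_{j=1}^c|\eta_j|-c+1\ge|\eta_1|$, since each $|\eta_j|\ge 1$; it therefore contains a full period, forcing $\Ext_A^i(M,N)=0$ for all $i>d$, i.e., $\p^A(M,N)\le d$. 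To finish I inspect the long exact sequence in degree $d$: if $|\eta_1|\ge 2$ then $\Ext_A^{d+|\eta_1|-1}(M,N)=0$ gives $\Ext_A^d(K_1,N)\hookrightarrow\Ext_A^d(M,N)$; if $|\eta_1|=1$ the sequence $0\to M\to K_1\to M\to 0$ gives a surjection $\Ext_A^d(K_1,N)\twoheadrightarrow\Ext_A^d(M,N)$ whose kernel is the image of $\Ext_A^d(M,N)$, so $\Ext_A^d(M,N)=0$ would force $\Ext_A^d(K_1,N)=0$. Either way $\Ext_A^d(M,N)\ne 0$, contradicting the alternative and giving equality. The main obstacle is precisely the combinatorial width check on the vanishing interval relative to the period $|\eta_1|$; once that is in hand, the $\eta_1$-periodicity engineered by the reducing element does all the remaining work.
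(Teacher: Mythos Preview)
Your proof is correct and follows exactly the paper's inductive strategy on $\cx M$, using the defining short exact sequence for $K_1$. The paper's argument is considerably terser: for the base case it directly produces a nonzero element of $\Ext_A^{\Pd M}(M,N)$ from a map $A\to N$ sending $1$ to some $x\in N\setminus\m N$ (your reduction to $d=1$ via syzygies reaches the same conclusion), and in the inductive step it simply asserts the equality $\p^A(M,N)=\p^A(K_1,N)$ and $\depth M=\depth K_1$, leaving implicit the periodicity argument, the width check on the vanishing window, and the degree-$d$ analysis that you spell out in detail.
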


\begin{proof}
We only need to show the implication (i) $\Rightarrow$ (iii), and
we do this by induction on $\cx M$. If the projective dimension of
$M$ is finite, then by the Auslander-Buchsbaum formula it is equal
to $\depth A - \depth M$. Since $N$ is finitely generated, we have
$N \neq \m N$ by Nakayama's Lemma, hence there exists a nonzero
element $x \in N \setminus \m N$. The map $A \to N$ defined by $1
\mapsto x$ then gives rise to a nonzero element of $\Ext_A^{\Pd M}
(M,N)$, and therefore $\p^A (M,N) = \depth A - \depth M$.

Now suppose the complexity of $M$ is nonzero, and consider the
exact sequence
\begin{equation*}\label{ES2}
0 \to M \to K_1 \to \Omega_A^{|\eta_1|-1} (M) \to 0.
\tag{$\dagger$}
\end{equation*}
The vanishing interval for $\Ext_A^i (M,N)$ implies that
$\Ext_A^{t+i} (K_1,N)=0$ for $0 \leq i \leq |\eta_2| + \cdots +
|\eta_c|-(c-1)$, and so by induction $\p^A (K_1,N)= \depth A -
\depth K_1$. Since we have equalities $\p^A(M,N) = \p^A (K_1,N)$
and $\depth M = \depth K_1$, we are done.
\end{proof}

\begin{corollary}\label{cor3.2}
$\p^A (M,M)= \Pd M$.
\end{corollary}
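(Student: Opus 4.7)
The plan is to prove $\p^A(M,M) = \Pd M$ by induction on $\cx M$, using Theorem \ref{thm3.1} combined with the reducing sequence $0 \to M \to K_1 \to \Omega_A^{|\eta_1|-1}(M) \to 0$. The base case is any $M$ with $\Pd M < \infty$ (which includes $\cx M = 0$): then $\p^A(M, M) \le \Pd M < \infty$, and Theorem \ref{thm3.1} together with the Auslander-Buchsbaum formula gives $\p^A(M, M) = \depth A - \depth M = \Pd M$.

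For the inductive step, assume $\cx M \ge 1$, so $\Pd M = \infty$, and suppose for contradiction that $\p^A(M, M) < \infty$. Two applications of the long exact $\Ext$ sequence attached to the reducing sequence, together with the standard syzygy dimension shifts $\Ext_A^i(\Omega_A^{|\eta_1|-1}M, M) \cong \Ext_A^{i+|\eta_1|-1}(M, M)$ and $\Ext_A^i(K_1, \Omega_A^{|\eta_1|-1}M) \cong \Ext_A^{i-|\eta_1|+1}(K_1, M)$ (valid for $i \ge 1$ and $i \ge |\eta_1|$ respectively), propagate the finiteness of $\p^A(M, M)$ first to $\p^A(K_1, M)$ and then to $\p^A(K_1, K_1)$. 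Since $K_1$ has reducible complexity with $\cx K_1 < \cx M$, the inductive hypothesis applied to $K_1$ gives $\Pd K_1 = \p^A(K_1, K_1) < \infty$, and Auslander-Buchsbaum with $\depth K_1 = \depth M$ forces $\Pd K_1 = d := \depth A - \depth M$, so $\Omega_A^d K_1$ is free.

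The contradiction comes from analyzing syzygies of the reducing sequence. Applying the Horseshoe Lemma and passing to $d$-th syzygies produces a short exact sequence $0 \to \Omega_A^d M \to F \to \Omega_A^{d+|\eta_1|-1}M \to 0$ with $F$ free (the middle term equals $\Omega_A^d K_1$ plus a free summand, and both are free). Consequently $\Omega_A^d M \cong \Omega_A^{d+|\eta_1|}M \oplus F'$ for some free $F'$, and one further syzygy eliminates the free part, yielding $N := \Omega_A^{d+1}M \cong \Omega_A^{|\eta_1|}N$. So $N$ is periodic of period $|\eta_1|$ and nonzero, since $\Pd M = \infty$. The periodicity isomorphism represents a nonzero class in $\Ext_A^{|\eta_1|}(N, N)$: otherwise the isomorphism $\Omega_A^{|\eta_1|}N \to N$ would factor through the free module in degree $|\eta_1|-1$ of the minimal resolution of $N$, making $\Omega_A^{|\eta_1|}N \cong N$ a direct summand of a free module and hence free, contradicting $\Pd N = \infty$. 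Iterating the first-argument dimension shift then gives $\Ext_A^{k|\eta_1|}(N, N) \ne 0$ for every $k \ge 1$, and combining both dimension shifts gives $\Ext_A^i(N, N) \cong \Ext_A^i(M, M)$ for $i \ge 1$, so $\p^A(M, M) = \infty$, a contradiction. The main obstacle is this final step: bridging the finite projective dimension of $K_1$ back to a periodic syzygy of $M$ via Horseshoe, after which periodicity automatically supplies non-vanishing Ext classes in arbitrarily high degrees.
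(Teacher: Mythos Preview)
Your argument has a genuine gap in the step where you propagate finiteness from $\p^A(K_1,M)$ to $\p^A(K_1,K_1)$. The ``dimension shift'' you invoke in the covariant variable,
\[
\Ext_A^i\bigl(K_1,\Omega_A^{|\eta_1|-1}M\bigr)\;\cong\;\Ext_A^{\,i-|\eta_1|+1}(K_1,M),
\]
is \emph{not} a standard isomorphism: from the short exact sequences $0\to\Omega_A^{j}M\to F_{j-1}\to\Omega_A^{j-1}M\to 0$ one only gets exact sequences whose obstruction terms are copies of $\Ext_A^{*}(K_1,A)$. Those terms vanish precisely when $\Pd K_1<\infty$, which is exactly what you are trying to establish via the inductive hypothesis; the reasoning is circular. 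The same problem recurs at the end, where you claim $\Ext_A^i(N,N)\cong\Ext_A^i(M,M)$ for $N=\Omega_A^{d+1}M$: shifting the syzygy out of the second argument again requires $\Ext_A^{*}(M,A)=0$ in a range, which is not available in general (nothing in the standing hypotheses forces $A$ to be Gorenstein or $M$ to have finite G--dimension).

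The paper's proof avoids this obstacle entirely and is shorter. From $\p^A(M,M)<\infty$ one sees directly that $\eta_1\in\Ext_A^{|\eta_1|}(M,M)$ is nilpotent, say $\eta_1^{t}=0$; hence the extension defining $K_{\eta_1^{t}}$ splits and $\cx K_{\eta_1^{t}}=\cx M$. On the other hand Lemma~\ref{lem2.3} gives $\cx K_{\eta_1^{t}}\le\cx K_{\eta_1}<\cx M$, a contradiction. No covariant dimension shift is needed. If you want to repair your approach rather than switch to this one, you must supply an independent reason why $\Ext_A^{\gg 0}(K_1,A)=0$ (or $\Ext_A^{\gg 0}(M,A)=0$); without such input the passage from $\p^A(K_1,M)<\infty$ to $\p^A(K_1,K_1)<\infty$ does not go through.
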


\begin{proof}
Suppose $\p^A (M,M)< \infty$. If the projective dimension of $M$ is
not finite, i.e.\ if $\cx M >0$, then consider the exact sequence
(\ref{ES2}) representing $\eta_1$ (from the proof of Theorem
\ref{thm3.1}), where $K_1 = K_{\eta_1}$. Since $\eta_1$ is nilpotent
there is an integer $t$ such that $\eta_1^t =0$, and therefore $\cx
K_{\eta_1^t} = \cx M$. But using Lemma \ref{lem2.3} we see that $\cx
K_{\eta_1^i} \leq \cx K_{\eta_1}$ for all $i \geq 1$, and since $\cx
K_{\eta_1} < \cx M$ we have reached a contradiction. Therefore the
projective dimension of $M$ is finite and equal to $\depth A -
\depth M$ by the Auslander-Buchsbaum formula, and from Theorem
\ref{thm3.1} we see that $\p^A (M,M)= \Pd M$.
\end{proof}

The next result is a homology version of Theorem \ref{thm3.1}, and
it is closely related to \cite[Theorem 2.1]{Jorgensen1}.

\begin{theorem}\label{thm3.3}
The following are equivalent.
\begin{enumerate}
\item[(i)] There exists an integer $t > \depth A - \depth M$ such
that $\Tor^A_{t+i} (M,N)=0$ for $0 \leq i \leq |\eta_1| + \cdots +
|\eta_c|-c$. \item[(ii)] $\q^A (M,N) < \infty.$ \item[(iii)]
$\depth A - \depth M - \depth N \leq \q^A (M,N) \leq \depth A -
\depth M.$
\end{enumerate}
\end{theorem}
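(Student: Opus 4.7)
The plan is to mirror the proof of Theorem \ref{thm3.1} with Ext replaced by Tor throughout, relying on the long exact sequence of Tor induced by the short exact sequence
\[ 0 \to M \to K_1 \to \Omega_A^{|\eta_1|-1}(M) \to 0 \]
from the Assumption. Only the implication (i) $\Rightarrow$ (iii) requires argument: (iii) $\Rightarrow$ (ii) is trivial, and (ii) $\Rightarrow$ (i) follows by choosing any $t$ exceeding both $\q^A(M,N)$ and $\depth A - \depth M$. I argue by induction on $\cx M$.

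For the base case $\cx M = 0$, the chain of elements $\eta_i$ is empty and (i) is automatically satisfied since $\Tor_j^A(M,N) = 0$ for $j > \Pd M$. The Auslander--Buchsbaum formula gives $\Pd M = \depth A - \depth M$, yielding the upper bound $\q^A(M,N) \leq \Pd M$ at once. The lower bound $\q^A(M,N) \geq \Pd M - \depth N$ is a classical Auslander-type estimate for modules of finite projective dimension, and combining the two inequalities gives (iii).

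For the inductive step $\cx M > 0$, apply $- \otimes_A N$ to the displayed exact sequence and use the shift isomorphism $\Tor_j^A(\Omega_A^{|\eta_1|-1}(M), N) \cong \Tor_{j+|\eta_1|-1}^A(M,N)$ for $j \geq 1$. A direct inspection of the resulting long exact sequence yields two facts: first, the vanishing of $\Tor_{t+i}^A(M,N)$ for $0 \leq i \leq |\eta_1| + \cdots + |\eta_c| - c$ forces the vanishing of $\Tor_{t+i}^A(K_1, N)$ for $0 \leq i \leq |\eta_2| + \cdots + |\eta_c| - (c-1)$; second, one has $\q^A(M,N) = \q^A(K_1, N)$ (for indices strictly above $\q^A(M,N)$ both flanking Tor groups of $M$ in the LES vanish, and at $i = \q^A(M,N)$ the resulting map $\Tor_i^A(M,N) \hookrightarrow \Tor_i^A(K_1, N)$ is injective with nonzero domain). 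The module $K_1$ has reducible complexity with subordinate chain $\eta_2, \dots, \eta_c$ and satisfies $\cx K_1 < \cx M$ and $\depth K_1 = \depth M$, so the inductive hypothesis applied to $K_1$ and $N$ gives
\[ \depth A - \depth K_1 - \depth N \leq \q^A(K_1, N) \leq \depth A - \depth K_1, \]
which transfers verbatim to $M$.

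I expect the base case to be the main obstacle: the lower bound $\q^A(M,N) \geq \Pd M - \depth N$ is not internal to the paper and must be extracted from the classical theory of Tor for modules of finite projective dimension. The inductive step, by contrast, is a bookkeeping exercise with the long exact sequence, entirely analogous to what was done in the proof of Theorem \ref{thm3.1}.
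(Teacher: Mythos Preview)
Your proposal is correct and follows essentially the same route as the paper: induction on $\cx M$, with the base case handled by Auslander--Buchsbaum together with the classical lower bound (the paper cites \cite[Remark 8]{Choi} for exactly this), and the inductive step obtained from the long exact Tor-sequence of $(\dagger)$ via the equalities $\q^A(M,N)=\q^A(K_1,N)$ and $\depth K_1=\depth M$. Your write-up is in fact more explicit than the paper's about why $\q^A(M,N)=\q^A(K_1,N)$; just note that this part of your argument tacitly uses $\q^A(M,N)<\infty$, which you should secure first from the inductive bound $\q^A(K_1,N)\leq \depth A-\depth M<t$ together with the vanishing window in (i).
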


\begin{proof}
We only need to show the implication (i) $\Rightarrow$ (iii), and
we do this by induction on $\cx M$. The case $\Pd M < \infty$
follows from the Auslander-Buchsbaum formula and \cite[Remark
8]{Choi}, so suppose therefore $\cx M >0$, and consider the exact
sequence (\ref{ES2}) from the proof of Theorem \ref{thm3.1}. Since
$\Tor_{t+i}^A (K_1,N)=0$ for $0 \leq i \leq |\eta_2| + \cdots +
|\eta_c|-(c-1)$, we get by induction that the inequalities hold
for $K_1$ and $N$. But as in the previous proof we have $\q^A(M,N)
= \q^A (K_1,N)$ and $\depth M = \depth K_1$, hence the
inequalities hold for $M$ and $N$.
\end{proof}

The following result contains half of \cite[Theorem 3]{Choi} (and a
version of the first half of \cite[Theorem 2.5]{Araya}) and the main
result from \cite{Jorgensen2} for Cohen-Macaulay rings, which says
that the integer $\q^A (M,N)$ can be computed locally. It also
establishes the \emph{depth formula} provided $N$ is maximal
Cohen-Macaulay.

\begin{theorem}\label{thm3.4}
\begin{enumerate}
\item[(i)] If $\q^A (M,N)$ is finite and $\depth \Tor_{\q^A
(M,N)}^A (M,N) =0$, then
$$\q^A (M,N) = \depth A - \depth M - \depth N.$$
\item[(ii)] If $A$ is Cohen-Macaulay and $\q^A (M,N)$ is finite,
then the equality
$$\q^A (M,N) = \sup \{ \Ht \pr - \depth M_{\pr} - \depth
N_{\pr} \mid \pr \in \Spec A \}$$ holds. \item[(iii)] If $\q^A
(M,N)=0$ and $N$ is maximal Cohen-Macaulay, then the depth formula
holds for $M$ and $N$, i.e.\
$$\depth M + \depth N = \depth A + \depth (M \otimes N).$$
\end{enumerate}
\end{theorem}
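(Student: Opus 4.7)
The plan is to prove (i) by induction on $\cx M$, then derive (ii) from (i) by prime localization paired with Theorem \ref{thm3.3}(iii), and deduce (iii) from (i) by killing a maximal regular sequence on $M \otimes_A N$; the main obstacle throughout is the inductive step of (i), where the ``top Tor has depth zero'' hypothesis must be transferred from $M$ to the reducing module $K_1$.

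For (i), the base case $\Pd M < \infty$ is handled by the classical Auslander depth formula: applied to the $q$-th syzygy $\Omega_A^q(M)$, where $q = \q^A(M,N)$, and combined with the injection $\Tor_q^A(M,N) \hookrightarrow \Omega_A^q(M) \otimes_A N$ coming from the $q$-th syzygy short exact sequence, it forces $\depth(\Omega_A^q(M) \otimes_A N) = 0$ and hence $q = \depth A - \depth M - \depth N$ (this combines Auslander--Buchsbaum with \cite[Remark 8]{Choi}, as already invoked in Theorem \ref{thm3.3}). For the inductive step, tensor the sequence $0 \to M \to K_1 \to \Omega_A^{|\eta_1|-1}(M) \to 0$ with $N$ and use the dimension shift $\Tor_i^A(\Omega_A^{|\eta_1|-1}(M),N) = \Tor_{i+|\eta_1|-1}^A(M,N)$ for $i \geq 1$. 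The resulting long exact sequence sandwiches $\Tor_n^A(K_1,N)$ between two vanishing Tors of $M$ when $n > q$, and at $n = q$ produces an injection $\Tor_q^A(M,N) \hookrightarrow \Tor_q^A(K_1,N)$. Since $\m \in \Ass_A \Tor_q^A(M,N)$ transports through this injection to give $\depth \Tor_q^A(K_1,N) = 0$, and since $\cx K_1 < \cx M$ and $\depth K_1 = \depth M$, the induction hypothesis applied to $K_1$ closes the argument.

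Part (ii) proceeds prime by prime. One inequality comes from Theorem \ref{thm3.3}(iii) applied at each $A_\pr$, giving $\Ht \pr - \depth M_\pr - \depth N_\pr \leq \q^{A_\pr}(M_\pr,N_\pr) \leq \q^A(M,N)$ via $\depth A_\pr = \Ht \pr$ from Cohen--Macaulayness. For the reverse inequality, pick $\pr \in \Ass_A \Tor_{\q^A(M,N)}^A(M,N)$; then $\q^{A_\pr}(M_\pr,N_\pr) = \q^A(M,N)$ and the top Tor has depth zero over $A_\pr$, so (i) applied locally matches this prime to the supremum. (A technical point is that $M_\pr$ must have reducible complexity over $A_\pr$; this follows from localizing the chain $K_0, \ldots, K_c$ and discarding links whose complexity drops collapse.) For (iii), set $d = \depth(M \otimes_A N)$; since $N$ is maximal Cohen--Macaulay one has $\depth N = \depth A \geq d$, and prime avoidance produces $x_1, \ldots, x_d \in \m$ forming a regular sequence on both $N$ and $M \otimes_A N$ simultaneously. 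Writing $\overline{N} = N/(x_1,\ldots,x_d)N$, an induction on $d$ using the Tor long exact sequence of $0 \to N^{(i-1)} \xrightarrow{x_i} N^{(i-1)} \to N^{(i)} \to 0$ shows that $\Tor_i^A(M,\overline{N}) = 0$ for $i \geq 1$ and that $M \otimes_A \overline{N} = (M \otimes_A N)/(x_1,\ldots,x_d)(M \otimes_A N)$ has depth zero; applying (i) to $(M,\overline{N})$ then yields $0 = \depth A - \depth M - (\depth N - d)$, which rearranges to the desired depth formula.
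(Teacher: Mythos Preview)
Your argument for (i) matches the paper's: induction on $\cx M$, with the injection $\Tor_q^A(M,N)\hookrightarrow\Tor_q^A(K_1,N)$ transferring the depth-zero hypothesis. Your argument for (iii) is correct but genuinely different from the paper's. The paper proves (iii) by its own induction on $\cx M$: tensoring the reducing sequence with $N$ and using that $\depth\bigl(\Omega_A^i(M)\otimes N\bigr)$ is nondecreasing in $i$ (since $N$ is MCM) to show $\depth(M\otimes N)=\depth(K_1\otimes N)$. Your route—killing a common regular sequence on $N$ and $M\otimes N$ to reduce to the depth-zero case of (i)—is cleaner in that it does not repeat the inductive machinery, though note that for non-Cohen--Macaulay $A$ one has $\depth N=\dim A$, not $\depth A$; the inequality $d\leq\depth N$ you need still holds.

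The gap is in (ii). You invoke Theorem~\ref{thm3.3} and part (i) over $A_\pr$, both of which require $M_\pr$ to have reducible complexity over $A_\pr$. Your justification—localize the chain $K_0,\dots,K_c$ and ``discard links whose complexity drops collapse''—does not work: the definition of reducible complexity demands at each stage an element of $\Ext_{A_\pr}^*\bigl((K_{i-1})_\pr,(K_{i-1})_\pr\bigr)$ whose associated $K$-module has \emph{strictly} smaller complexity, and if $\cx_{A_\pr}(K_i)_\pr=\cx_{A_\pr}(K_{i-1})_\pr$ you cannot simply skip to a later $K_j$, since the intermediate extensions live in the wrong Ext groups and do not compose to a single self-extension of $(K_{i-1})_\pr$. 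Whether localization always preserves reducible complexity is not established in the paper, and your sketch does not prove it.

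The paper avoids this entirely. For the inequality $\q^A(M,N)\geq\Ht\pr-\depth M_\pr-\depth N_\pr$ it localizes the whole chain of exact sequences and observes directly that $\q^{A_\pr}\bigl((K_i)_\pr,N_\pr\bigr)=\q^{A_\pr}\bigl((K_{i-1})_\pr,N_\pr\bigr)$ and $\depth(K_i)_\pr=\depth M_\pr$ (the latter using Cohen--Macaulayness of $A_\pr$ via the remark after Definition~\ref{def}); since $(K_c)_\pr$ has finite projective dimension, the base case of Theorem~\ref{thm3.3} applies to it. For the reverse inequality the paper likewise runs ``a small adjustment of the proof of (i)'' through the localized chain rather than invoking (i) wholesale. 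Your argument is easily repaired along these lines, but as written the appeal to reducible complexity of $M_\pr$ is unjustified.
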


\begin{proof}
(i) We argue by induction on $\cx M$, the case $\Pd M < \infty$
following from \cite[Theorem 1.2]{Auslander} and the
Auslander-Buchsbaum formula. Suppose therefore that the complexity
of $M$ is nonzero, and consider the exact sequence (\ref{ES2})
from the proof of Theorem \ref{thm3.1}. Since $\Tor_{\q^A (M,N)}^A
(M,N)$ is a submodule of $\Tor_{\q^A (M,N)}^A (K_1,N)$, the latter
is also of depth zero, hence by induction and the equalities $\q^A
(K_1,N)= \q^A (M,N), \depth K_1 = \depth M$ we are done.

(ii) Suppose $\q^A (M,N)$ is finite, and let $\pr \subseteq A$ be a
prime ideal. If $M$ has finite projective dimension, so has
$M_{\pr}$, and from Theorem \ref{thm3.3} we get $\q^{A_{\pr}}
(M_{\pr},N_{\pr}) \geq \Ht \pr - \depth M_{\pr} - \depth N_{\pr}$.
If $\cx M >0$, consider the exact sequences
$$0 \to K_{i-1} \to K_i \to \Omega_A^{|\eta_i|-1}(K_{i-1}) \to 0$$
for $i=1, \dots, c$ (where $K_0=M$), satisfying $\depth K_i =
\depth M, \cx K_i < \cx K_{i-1}$ and $\cx K_c =0$. Localizing at
$\pr$, we see that $\depth (K_i)_{\pr} = \depth M_{\pr}$ (as in
the remark following Definition \ref{def}), and that $\q^{A_{\pr}}
((K_i)_{\pr},N_{\pr}) = \q^{A_{\pr}} ((K_{i-1})_{\pr},N_{\pr})$.
As $K_c$ has finite projective dimension we get
\begin{eqnarray*}
\q^{A_{\pr}} (M_{\pr},N_{\pr}) & = & \q^{A_{\pr}}
((K_c)_{\pr},N_{\pr}) \\
& \geq & \Ht \pr - \depth (K_c)_{\pr} - \depth N_{\pr} \\
& = & \Ht \pr - \depth M_{\pr} - \depth N_{\pr},
\end{eqnarray*}
hence since $\q^A (M,N) \geq \q^{A_{\pr}} (M_{\pr},N_{\pr})$ the
inequality
$$\q^A (M,N) \geq \sup \{ \Ht \pr - \depth M_{\pr} - \depth
N_{\pr} \mid \pr \in \Spec A \}$$ holds.

For the reverse inequality, choose any associated prime $\pr$ of
$\Tor_{\q^A (M,N)}^A(M,N)$. Then $\q^A (M,N) = \q^{A_{\pr}}
(M_{\pr},N_{\pr})$ and $\depth
\Tor_{\q^{A_{\pr}}(M_{\pr},N_{\pr})}^{A_{\pr}}(M_{\pr},N_{\pr})
=0$, and a small adjustment of the proof of (i) above gives
$$\q^{A_{\pr}} (M_{\pr},N_{\pr}) = \Ht \pr - \depth M_{\pr} -
\depth N_{\pr}.$$

(iii) Again we argue by induction on $\cx M$, where the case $\Pd
M < \infty$ follows from \cite[Theorem 1.2]{Auslander}. Suppose
$\cx M>0$, and consider the exact sequence (\ref{ES2}). We have
$\q^A (K_1,N)=0$, hence by induction the depth formula holds for
$K_1$ and $N$. Since $\depth K_1 = \depth M$, we only have to
prove that the equality $\depth (M \otimes N) = \depth (K_1
\otimes N)$ holds. For each $i \geq 0$ we have an exact sequence
$$0 \to \Omega_A^{i+1}(M) \otimes N \to F_i \otimes N \to
\Omega_A^i(M) \otimes N \to 0,$$ and as $N$ is maximal
Cohen-Macaulay we must have that $\depth \left ( \Omega_A^i(M)
\otimes N \right )$ is at most $\depth \left ( \Omega_A^{i+1}(M)
\otimes N \right )$. In particular the inequality $\depth (M
\otimes N) \leq \depth \left ( \Omega_A^{|\eta_1|-1}(M) \otimes N
\right )$ holds, and therefore when tensoring the sequence
(\ref{ES2}) with $N$ we see that $\depth (M \otimes N) = \depth
(K_1 \otimes N)$.
\end{proof}

\sloppy The next result deals with symmetry in the vanishing of
$\Ext$. It was shown in \cite{Avramov2} that if $X$ and $Y$ are
modules over a complete intersection $A$, then $\Ext_A^i(X,Y)=0$ for
$i \gg 0$ if and only if $\Ext_A^i(Y,X)=0$ for $i \gg 0$. This was
generalized in \cite{HunekeJorgensen} to a class of local Gorenstein
rings named ``AB rings", a class properly containing the class of
complete intersections. Another generalization appeared in
\cite{PJorgensen}, where techniques from the theory of derived
categories were used to show that symmetry in the vanishing of
$\Ext$ holds for modules of finite complete intersection dimension
over local Gorenstein rings.

\begin{theorem}\label{thm3.5}
If $A$ is Gorenstein then the implication
$$\p^A (N,M) < \infty \Rightarrow \p^A (M,N) < \infty$$
holds. In particular, symmetry in the vanishing of $\Ext$ holds for
modules with reducible complexity over a local Gorenstein ring.
\end{theorem}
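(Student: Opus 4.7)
I propose to prove the implication by induction on $\cx M$. The base case $\cx M = 0$ is immediate: then $\Pd M < \infty$, so $\p^A(M,N) \leq \Pd M < \infty$. For the inductive step assume $\cx M > 0$, and work with the short exact sequence
\[
0 \to M \to K_1 \to \Omega_A^{|\eta_1|-1}(M) \to 0 \qquad (\dagger)
\]
furnished by the standing Assumption, in which $K_1$ has reducible complexity and $\cx K_1 < \cx M$. The strategy is to transfer the Ext-vanishing hypothesis from $M$ to $K_1$ in the first argument, invoke induction, and then transfer back to $M$ in the second argument.

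First I would prove $\p^A(N,M) < \infty \Rightarrow \p^A(N, K_1) < \infty$, and here the Gorenstein hypothesis enters in an essential way. Because $A$ is Gorenstein of Krull dimension $d$, every free $A$-module has injective dimension equal to $d$, so $\Ext_A^i(N,F)=0$ for every free $F$ and every $i>d$. Applying $\Hom_A(N,-)$ to the syzygy sequences $0 \to \Omega_A^{j+1}(M) \to F_j \to \Omega_A^j(M) \to 0$ and iterating the resulting dimension-shifting isomorphisms yields
\[
\Ext_A^i\bigl(N,\Omega_A^{|\eta_1|-1}(M)\bigr) \cong \Ext_A^{\,i-(|\eta_1|-1)}(N,M)
\]
for $i$ large, whence $\p^A\bigl(N,\Omega_A^{|\eta_1|-1}(M)\bigr)<\infty$. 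The long exact sequence from $\Hom_A(N,-)$ applied to $(\dagger)$ then sandwiches $\Ext_A^i(N,K_1)$ between two eventually vanishing terms, giving $\p^A(N,K_1)<\infty$. Since $K_1$ has reducible complexity with strictly smaller complexity than $M$, the inductive hypothesis delivers $\p^A(K_1,N)<\infty$.

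It remains to prove the reverse transfer $\p^A(K_1,N)<\infty \Rightarrow \p^A(M,N)<\infty$. Applying $\Hom_A(-,N)$ to $(\dagger)$ and using the standard identification $\Ext_A^{i+1}(\Omega_A^{|\eta_1|-1}(M),N) \cong \Ext_A^{\,i+|\eta_1|}(M,N)$ for $i \geq 0$, the long exact sequence collapses, once $\Ext_A^i(K_1,N)$ has vanished, into isomorphisms $\Ext_A^i(M,N) \to \Ext_A^{\,i+|\eta_1|}(M,N)$ (for $i \gg 0$) realized by Yoneda multiplication by $\eta_1$. This produces eventual $\eta_1$-periodicity of $\Ext_A^*(M,N)$.

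The main obstacle is upgrading this periodicity to genuine vanishing. My plan is to exploit the fact that Theorem \ref{thm3.1} applied to $K_1$ pins down the precise threshold $\Ext_A^i(K_1,N)=0$ for $i>\depth A-\depth M$, and to combine this with the $\eta_1$-periodicity just established to manufacture a vanishing interval for $\Ext_A^*(M,N)$ of length at least $|\eta_1|+\cdots+|\eta_c|-c+1$ sitting above $\depth A-\depth M$. Once such an interval is produced, Theorem \ref{thm3.1} applied to $M$ itself forces $\p^A(M,N)<\infty$, concluding the induction and establishing the asserted symmetry of Ext-vanishing over Gorenstein rings for modules of reducible complexity.
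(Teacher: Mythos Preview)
Your inductive scheme breaks down at the reverse transfer. Observe first that when $\cx M=1$ the module $K_1$ has finite projective dimension, so $\p^A(K_1,N)<\infty$ holds for \emph{every} $N$, independently of the hypothesis $\p^A(N,M)<\infty$. Hence if the implication ``$\p^A(K_1,N)<\infty \Rightarrow \p^A(M,N)<\infty$'' you propose were valid, it would force $\p^A(M,N)<\infty$ for all $N$ whenever $\cx M=1$, which is absurd. Concretely, take $A=k[x]/(x^2)$, $M=N=k$, and let $\eta_1\in\Ext_A^1(k,k)$ be the nonzero class, so that the sequence $(\dagger)$ reads $0\to k\to A\to k\to 0$ and $K_1=A$. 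Then $\p^A(K_1,N)=\p^A(A,k)=0$ while $\p^A(M,N)=\p^A(k,k)=\infty$; your proposed remedy of combining Theorem~\ref{thm3.1} for $K_1$ with the $\eta_1$-periodicity of $\Ext_A^*(M,N)$ produces no vanishing whatsoever here, since $\Ext_A^i(k,k)=k$ for every $i$. The underlying problem is that after invoking the inductive hypothesis you have discarded the assumption $\p^A(N,M)<\infty$; the reverse transfer is hopeless without it, and your outline supplies no mechanism for feeding that hypothesis back in at the final step.

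The paper proceeds along an entirely different route, avoiding induction on $\cx M$ for this theorem. It uses Theorem~\ref{thm3.3} to extract from the reducible-complexity hypothesis a fixed finite gap length $w$ with the property that any window of $w+1$ consecutive vanishing $\Tor$ groups forces $\Tor^A_i(M,N)=0$ for all $i\ge 1$, and then invokes the derived-category symmetry results of P.~J{\o}rgensen, which over a Gorenstein ring convert exactly such a Tor-gap condition into the desired Ext symmetry. Thus the passage from $\p^A(N,M)<\infty$ to $\p^A(M,N)<\infty$ runs through $\Tor$, not through the sequence $(\dagger)$, and the Gorenstein hypothesis enters via J{\o}rgensen's duality rather than only through the finite injective dimension of free modules.
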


\begin{proof}
Define the integer $w$ (depending on $M$) by
$$w= \depth A - \depth M + |\eta_1| + \cdots
+ |\eta_c|-c.$$ If for some integer $i \geq 1$ we have
$\Tor^A_i(M,N) = \cdots = \Tor^A_{i+w}(M,N)=0$, then
$\Tor^A_i(M,N)=0$ for all $i \geq 1$ by Theorem \ref{thm3.3}. The
result now follows from \cite[Theorem 1.7 and Proposition
2.2]{PJorgensen}.
\end{proof}

The final result deals with the vanishing of homology for two
modules when $A$ is a complete intersection. Namely, in this
situation the homology modules are given as the homology modules
of two modules of finite projective dimension, due to the fact
that every module over a complete intersection has reducible
complexity.

\begin{theorem}\label{thm3.6}
Suppose $A$ is a complete intersection, and let $X$ and $Y$ be
$A$-modules such that $\Tor_i^A(X,Y)=0$ for $i\gg 0$. Then there
exist $A$-modules $X'$ and $Y'$, both of finite projective
dimension, such that $\depth X = \depth X', \depth Y = \depth Y'$
and $\Tor_i^A(X,Y) \simeq \Tor_i^A(X',Y')$ for $i>0$.
\end{theorem}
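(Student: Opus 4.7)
The plan is to successively reduce $X$ and then $Y$ to modules of finite projective dimension via the reducibility machinery, choosing each reducing cohomology class of high enough degree that the long-exact $\Tor$-sequence forces an isomorphism on all positive-degree $\Tor$ groups at every step. Since $A$ is a complete intersection, every $A$-module has finite complete intersection dimension and hence reducible complexity by Proposition \ref{prop2.2}(i); so the reduction is available for both $X$ and $Y$. Fix an integer $n$ with $\Tor_j^A(X,Y)=0$ for all $j \geq n$.

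For the first step, pick any $\eta_1 \in \Ext_A^{|\eta_1|}(X,X)$ reducing the complexity of $X$; by Proposition \ref{prop2.4}(i) we may replace $\eta_1$ by a power $\eta_1^{t_1}$ so that $t_1|\eta_1| \geq n$. The resulting exact sequence
$$0 \to X \to K_1 \to \Omega_A^{t_1|\eta_1|-1}(X) \to 0$$
has $\depth K_1 = \depth X$ (automatic over the Cohen-Macaulay ring $A$, cf.\ the remark after Definition \ref{def}) and $\cx K_1 < \cx X$. Tensoring with $Y$ and using the shift isomorphism $\Tor_j^A(\Omega_A^m(X),Y) \simeq \Tor_{j+m}^A(X,Y)$ valid for $j \geq 1$, the choice $t_1|\eta_1|\geq n$ makes both flanking terms in the resulting long exact sequence vanish for every $j \geq 1$; hence $\Tor_j^A(K_1,Y) \simeq \Tor_j^A(X,Y)$ for all $j \geq 1$, and in particular $\Tor_j^A(K_1,Y)=0$ for $j \geq n$, so the same device is available at the next stage.

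Iterating at most $\cx X$ times produces $X':=K_c$ of finite projective dimension with $\depth X' = \depth X$ and $\Tor_j^A(X',Y) \simeq \Tor_j^A(X,Y)$ for all $j \geq 1$. Running the symmetric reduction on $Y$, seeded by $\Pd X' < \infty$ (so $\Tor_j^A(X',Y)=0$ for $j \gg 0$), yields $Y'$ of finite projective dimension with $\depth Y' = \depth Y$ and $\Tor_j^A(X',Y') \simeq \Tor_j^A(X',Y)$ for $j \geq 1$. Composing the two families of isomorphisms gives $\Tor_i^A(X,Y) \simeq \Tor_i^A(X',Y')$ for all $i>0$, which is the theorem.

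The whole argument hinges on the degree-control supplied by Proposition \ref{prop2.4}(i): without it, a reducing element of small degree would let nonvanishing $\Tor$ groups spill into low indices through the syzygy shift, breaking the isomorphism at that step. The remaining bookkeeping (depth equalities, strict decrease of complexity, reaching finite projective dimension in finitely many steps) is automatic from the definition of reducible complexity and the Cohen-Macaulayness of $A$.
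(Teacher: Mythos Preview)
Your proof is correct and follows essentially the same route as the paper's: pick a complexity-reducing element, pass to a high enough power via Proposition~\ref{prop2.4}(i) so that the syzygy term in the defining short exact sequence has vanishing $\Tor$ against the other module in all positive degrees, deduce the $\Tor$-isomorphism from the long exact sequence, and iterate on both variables. The only cosmetic difference is that the paper phrases the degree choice as ``choose $t$ so that $\Tor_i^A(\Omega_A^{t|\eta|-1}(X),Y)=0$ for $i>0$'' rather than via your explicit bound $t_1|\eta_1|\geq n$ together with the shift isomorphism, but these are equivalent.
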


\begin{proof}
If the complexity of one of $X$ and $Y$, say $X$, is nonzero, choose
a homogeneous element $\eta \in \Ext_A^*(X,X)$ reducing the
complexity. By Proposition \ref{prop2.4}(i) any power of $\eta$ also
reduces the complexity of $X$, so choose an integer $t$ such that
$\Tor_i^A(\Omega_A^{t |\eta|-1}(X),Y)=0$ for $i>0$. The element
$\eta^t$ is given by the short exact sequence
$$0 \to X \to K_{\eta^t} \to \Omega_A^{t |\eta|-1}(X) \to 0,$$
therefore by the choice of $t$ we see that $\Tor_i^A(X,Y)$ and
$\Tor_i^A(K_{\eta^t},Y)$ are isomorphic for $i>0$. Since $A$ is
Cohen-Macaulay we automatically have $\depth X = \depth K_{\eta^t}$,
and repeating this process we eventually obtain what we want.
\end{proof}

This result has consequences for the study of the rigidity of $\Tor$
over Noetherian local rings. This study was initiated by M.\
Auslander in his 1961 paper \cite{Auslander}, in which he proved his
famous rigidity theorem; if $X$ and $Y$ are modules over an
unramified regular local ring $R$, and $\Tor^R_n(X,Y)=0$ for some $n
\geq 1$, then $\Tor^R_i(X,Y)=0$ for all $i \geq n$ (recall that a
regular local ring $(S, \m_S)$ is said to be \emph{ramified} if it
is of characteristic zero while its residue class field has
characteristic $p >0$ and $p$ is an element of $\m_S^2$). In 1966
S.\ Lichtenbaum extended Auslander's rigidity theorem to all regular
local rings (see \cite{Lichtenbaum}), and subsequently Peskine and
Szpiro conjectured in \cite{PeskineSzpiro} that the theorem holds
for all Noetherian local rings provided one of the modules in
question has finite projective dimension. A counterexample to the
conjecture was provided by Heitmann in \cite{Heitmann}, where a
Cohen-Macaulay ring $R$ together with $R$-modules $X$ and $Y$ were
given, for which $\Pd X =2$ and $\Tor^R_1(X,Y)=0$, while
$\Tor^R_2(X,Y) \neq 0$.

However, whether the rigidity of $\Tor$ holds for Noetherian local
rings provided \emph{both} modules involved have finite projective
dimension is unknown. If this holds over complete intersections,
then Theorem \ref{thm3.6} shows that the conjecture of Peskine and
Szpiro also holds for such rings (i.e.\ rigidity of $\Tor$ holds
provided one of the modules involved has finite projective
dimension). In fact, the theorem shows that if rigidity holds over a
complete intersection $R$ provided both modules have finite
projective dimension, then rigidity holds over $R$ for all modules
$X$ and $Y$ satisfying $\Tor^R_i(X,Y)=0$ for $i \gg 0$.

\section{A generalization}

In this final section we discuss a situation which slightly
generalizes the concept of reducible complexity. Instead of letting
$M$ have reducible complexity as in Definition \ref{def}, we make
the following assumption:

\begin{assumption}
The complexity of $M$ is finite, and if it is nonzero then there
exist local rings $\{ R_i \}_{i=1}^c$ such that for each $i \in \{
1, \dots, c \}$ there is a faithfully flat local homomorphism
$R_{i-1} \to R_i$ (where $R_0=A$), an $R_i$-module $K_i$, an
integer $n_i$ and an exact sequence
$$0 \to R_i \otimes_{R_{i-1}} K_{i-1} \to K_i \to
\Omega_{R_i}^{n_i} (R_i \otimes_{R_{i-1}} K_{i-1}) \to 0$$ (where
$K_0 =M$) satisfying $\depth_{R_i} K_i = \depth_{R_i} (R_i
\otimes_{R_{i-1}} K_{i-1}), \cx_{R_i} K_i < \cx_{R_{i-1}} K_{i-1}$
and $\Pd_{R_c} K_c < \infty$.
\end{assumption}

Of course, if $M$ has reducible complexity then by choosing each
$R_i$ to be $A$ we see that the assumption is satisfied. Now let
$S \to T$ be any faithfully flat local homomorphism, and $X$ and
$Y$ any (finitely generated) $S$-modules. If $\bold{F}_X$ is a
minimal $S$-free resolution of $X$, then the complex $T \otimes_S
\bold{F}_X$ is a minimal $T$-free resolution of $T \otimes_S X$,
and by \cite[Proposition (2.5.8)]{Grothendieck} we have natural
isomorphisms
\begin{eqnarray*}
\Hom_T (T \otimes_S \bold{F}_X, T \otimes_S Y) & \simeq & T
\otimes_S \Hom_S ( \bold{F}_X, Y), \\
(T \otimes_S \bold{F}_X) \otimes_T (T \otimes_S Y) & \simeq & T
\otimes_S ( \bold{F}_X \otimes_S Y).
\end{eqnarray*}
Therefore we have isomorphisms
\begin{eqnarray*}
\Ext_T^i (T \otimes_S X,T \otimes_S Y) & \simeq & T \otimes_S
\Ext_S^i (X,Y), \\
\Tor^T_i (T \otimes_S X,T \otimes_S Y) & \simeq & T \otimes_S
\Tor^S_i (X,Y),
\end{eqnarray*}
and as $T$ is faithfully $S$-flat we then get
\begin{eqnarray*}
\Ext_T^i (T \otimes_S X,T \otimes_S Y)=0 & \Leftrightarrow &
\Ext_S^i (X,Y) =0, \\
\Tor^T_i (T \otimes_S X,T \otimes_S Y)=0 & \Leftrightarrow &
\Tor^S_i (X,Y) =0.
\end{eqnarray*}
We then get the equalities
\begin{eqnarray*}
\cx_S X &=& \cx_T (T \otimes_S X) \\
\p^S (X,Y) &=& \p^T (T \otimes_S X, T \otimes_S Y) \\
\q^S (X,Y) &=& \q^T (T \otimes_S X, T \otimes_S Y) \\
\depth_S X - \depth_S Y &=& \depth _T (T \otimes_S X) - \depth_T
(T \otimes_S Y),
\end{eqnarray*}
where the one involving depth follows from \cite[Theorem
23.3]{Matsumura}.

Using the above facts it is easy to see that both Theorem
\ref{thm3.1} and Corollary \ref{cor3.2} remain true in this new
situation, as does Theorem \ref{thm3.3} if we drop the left
inequality in (iii).

Suppose now that $M$ has finite complete intersection dimension.
Then \cite[Proposition 7.2]{Avramov3} and an argument similar to the
proof of Proposition \ref{prop2.2}(i) show that $M$ satisfies this
new assumption \emph{and that $n_i=1$ for each $1 \leq i \leq c$}.
Consequently, the vanishing intervals in Theorem \ref{thm3.1}(i) and
Theorem \ref{thm3.3}(i) are of length $\cx_A M+1$, as in
\cite[Theorem 4.7]{Avramov2} and \cite[Theorem 2.1]{Jorgensen1}.
Moreover, we obtain \cite[Theorem 4.2]{Avramov2}, which says that
$M$ is of finite projective dimension if and only if $\Ext_A^{2n}
(M,M)=0$ for some $n \geq 1$. To see this, note that when $\cx_A M
>0$ the extension
$$0 \to R_1 \otimes_A M \to K_1 \to \Omega_{R_1}^1
(R_1 \otimes_A M) \to 0$$ corresponds to an element $\theta \in
\Ext_{R_1}^2(R_1 \otimes_A M,R_1 \otimes_A M)$. If $\Ext_A^{2n}
(M,M)=0$ for some $n \geq 1$, then $\Ext_{R_1}^{2n}(R_1 \otimes_A
M,R_1 \otimes_A M)$ also vanishes, hence $\theta^{2n}=0$. As in the
proof of Corollary \ref{cor3.2} we obtain the contradiction
\begin{eqnarray*}
\cx_A M & = & \cx_{R_1} (R_1 \otimes_A M) \\
& = & \cx_{R_1} K_{\theta^{2n}} \\
& \leq & \cx_{R_1} K_1 \\
& < & \cx_A M,
\end{eqnarray*}
showing that we cannot have $\Ext_A^{2n} (M,M)=0$ for some $n \geq
1$ when $M$ is of positive complexity.

\section*{Acknowledgements}

I would like to thank Dave Jorgensen and my supervisor {\O}yvind
Solberg for valuable suggestions and comments on this paper.

\end{document}